\documentclass{article}
\usepackage{graphicx} 
\usepackage{amsthm, amsmath, amssymb, amsfonts, url, booktabs, tikz, setspace, fancyhdr, bm}
\usepackage{hyperref, comment}
\usepackage{geometry}
\geometry{verbose,tmargin=2.0cm,bmargin=2.0cm,lmargin=2.3cm,rmargin=2.3cm}
\usepackage{hyperref, enumerate}
\usepackage[shortlabels]{enumitem}
\usepackage[babel]{microtype}
\usepackage[english]{babel}
\usepackage[capitalise]{cleveref}

\newtheorem{defi}{Definition}

\newtheorem{cor}[defi]{Corollary}

\newtheorem{prop}[defi]{Proposition}
\newtheorem{prob}[defi]{Problem}
\newtheorem{exam}[defi]{Example}
\newtheorem{q}[defi]{Question}

\newtheorem{claim}[defi]{Claim}
\newcommand*{\myproofname}{Proof}
\newenvironment{claimproof}[1][\myproofname]{\begin{proof}[#1]}{\end{proof}}

\newcommand{\diff}{diff}

\newcommand*{\ceilfrac}[2]{\mathopen{}\left\lceil\frac{#1}{#2}\right\rceil\mathclose{}}
\newcommand*{\floorfrac}[2]{\mathopen{}\left\lfloor\frac{#1}{#2}\right\rfloor\mathclose{}}

\newcommand*{\abs}[1]{\left \lvert #1\right\rvert}
\newcommand{\diam}{diam}
\newcommand{\rad}{rad}

\title{Towards the essence of \v Solt\'es' problem}
\author{Stijn Cambie \thanks{Department of Computer Science, KU Leuven Campus Kulak-Kortrijk, 8500 Kortrijk, Belgium. Supported by a postdoctoral fellowship by the Research Foundation Flanders (FWO) with grant number 1225224N.}}
\date{}

\begin{document}

\maketitle

\begin{abstract}
We explore the question asking for graphs $G$ for which the total distance decreases, possibly by a fixed constant $k$, upon the removal of any of its vertices. We obtain results leading to intuition and doubts for the \v Solt\'es' problem ($k=0$) and its conjectures.
\end{abstract}

\section{Introduction}\label{sec:intro}

In 1991, \v Solt\'es~\cite{Soltes91} observed that if one removes a vertex of a cycle $C_{11}$, the total distance does not change, 
and asked whether there are other such graphs~$G$ (nowadays called \v Solt\'es' graphs).
It is one of the most elementary questions one can pose~\cite{KST23} related to $W(G)$, the total distance of $G$.
Infinitely many examples were given by Spiro~\cite{Spiro22} and Cambie~\cite{Cambie24} for the relaxation of \v Solt\'es' problem to signed graphs and  hypergraphs respectively.

Meanwhile, there are numerous open conjectures (weaker to stronger forms), conjecturing that \v Solt\'es' graphs are e.g. regular or vertex-transitive, and that $C_{11}$ is the only \v Solt\'es' graph. See~\cite[conj.~47, 48, 51]{KST23}.
One of the arguments, as can be seen in the conclusion of~\cite{BKS23}, was that no other examples were found among the $>10^8$ vertex-transitive graphs in the census by Holt and Royle~\cite{HR20}. We will give intuition why all of these (different from the cycles) satisfy $W(G)>W(G \setminus v)$ for $v \in V(G)$. 

In~\cref{sec:res_graphs}, we prove that if $W(G) \le W(G \setminus v)$ for every $v \in V(G),$ the diameter cannot be too small, and if the value $W(G) - W(G \setminus v)$ is independent of the vertex $v \in v(G),$ then $G$ is a cycle or the minimum degree of $G$ is at least $3.$

In~\cref{sec:conditionalconstruction} (and~\cref{sec:app}), we present eight graphs $G$ for which two third of the vertices $v$ satisfy $W(G \setminus v)=W(G)$ (these are \v Solt\'es' vertices), and give conditional examples of graphs for which an even higher fraction of the vertices satisfy the inequality.

In~\cite[Prob.~3]{AOVVVY23} (and~\cite[Prob.~44]{KST23}), the authors mentioned that arguments to solve \v Solt\'es' problem may also work for the following generalisation.
\begin{prob}\label{prob:extensionofSoltes}
    For a fixed \( z \in \mathbb{Z} \), find all graphs \( G \) for which the equality \( W(G) - W(G - v) = z \) holds for all vertices \( v \).
\end{prob}

As a first thing, we want to convince the reader that this problem is nearly impossible to solve in general\footnote{For fixed $z$, it might be possible to reduce to a finite number of candidates.} and by the reasoning of the authors, solving the \v Solt\'es' problem may be hard or impossible as well.


Of course, for every $z>0$, the graph $K_{z+1}$ is a solution, but there are values of $z$ for which there are many solutions. 
By an immediate argument using the pigeon hole principe on the number of cubic vertex-transitive graphs (which is of the form $n^{\Theta( \log n)}$ as estimated in~\cite{PSV17}), the number of solutions can be arbitrary large. 
For a concrete indication, the census by Holt and Royle~\cite{HR20} in \url{https://zenodo.org/records/4010122} contains more than $10^8$ vertex-transitive graphs with diameter bounded by $3$, each corresponding to a value $0<z<141$.

By focusing on a subfamily (defined later), for which the plausible values of $z$ can be both negative or positive, we can expect that also $z=0$ might have multiple solutions. We conjecture this is the case if there are e.g. infinitely many vertex-transitive graphs (different from cycles) for which $W(G)<W(G \setminus v)$ for all $v \in V(G).$

We add more doubts and remarks on the circulating conjectures in~\cref{sec:counterarg}, giving evidence why these conjectures may be false. 

In~\cref{sec:conc}, we give our conclusions and state our core question, which would lead to the essence behind \v Solt\'es' problem.
Calling a graph $G$ satisfying~\cref{prob:extensionofSoltes} for a $z \le 0$ a negative-\v Solt\'es' graph (these are far from vertex-robust for distances), we can summarize that the essential question is whether there are infinitely many negative-\v Solt\'es' graphs which are not cycles. 

\subsection{Notation and Terminology}\label{subsec:not&def}

We only consider simple connected graphs $G=(V,E).$
The degree of a vertex $v$, denoted by $\deg(v)$, is the number of edges containing $v$. The minimum and maximum degree, $\delta$ and $\Delta$, represent the smallest and largest degrees. When removing the vertex $v$ and its incident edges from $G$, we obtain $G \setminus v$.

The distance $d_G(u,v)$ or $d(u,v)$ between two vertices $u$ and $v$ is equal to the length of the shortest path between them. The diameter, $\diam(G)$, is the largest among all possible distances and the total distance (or Wiener index), $W(G)$, the sum over all of them; $W(G)=\sum_{u,v \in V} d(u,v)$. The radius $\rad(G)= \min_{u \in V} \max_{v \in V} d(u,v)$ gives the maximum distance from a central vertex.
The transmission of a vertex $v$, $\sigma(v)=\sum_{u \in V} d(u,v)$, is the sum of distances between $v$ and the other vertices.
We also define the arc-graph of $G$.
\begin{defi}
    Let $G=(V(G),E(G))$ be a graph.
    Let $A(G)$ be the arcs of $G$, i.e., the ordered pairs of neighbouring vertices.
    Note that $\abs{A(G)}=2\abs{E(G)}.$
    The \textbf{arc-graph} $H$ of $G$ has vertex set $A(G)$, with two arcs being adjacent if \begin{itemize}
        \item they correspond to the same edge, i.e., $(u,v)$ is adjacent to $(v,u)$ when $uv \in E(G),$
        \item the arcs have the same root, i.e., $(v,u)$ and $(v,w)$ are adjacent in $H$.
    \end{itemize}
    Equivalently, $H$ is equal to the line-graph of the subdivision of $G.$
\end{defi}

Essentially, ranging over all vertices, a vertex $v$ of degree $d$ is replaced by a clique $K_d$ whose endvertices are connected with one initial neighbour of $v$ each.
It has been introduced before as the subdivided-line graph in~\cite{H15}, and for cubic graphs, as the truncation of the graph~\cite{BKS23}.
An example, $A(K_4)$, and a sketch of the local replacement (for $d=7$) is shown in~\cref{fig:arcgraph_K4}.

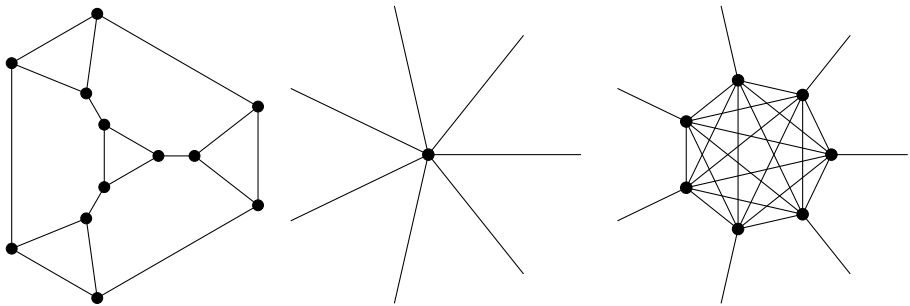
\begin{figure}[h]
    \centering
    \begin{tikzpicture}[scale=0.6*0.8]
\foreach \x in {0,2,4}{
\draw (60*\x+20:4)--(\x*60-20:4)--(\x*60:2)--cycle;
}

\draw (120:1)--(240:1)--(0:1)--cycle;

\foreach \x in {1,3,5}{
\draw (60*\x+40:4)--(\x*60-40:4);
}
\foreach \x in {0,1,2}{
\draw[fill] (\x*120:1) circle (0.15);
\draw[fill] (\x*120:2) circle (0.15);
\draw[fill] (\x*120+20:4) circle (0.15);
\draw[fill] (\x*120-20:4) circle (0.15);
\draw (\x*120:1)--(\x*120:2);
}
\end{tikzpicture}\quad
    \begin{tikzpicture}[scale=0.7*0.725]

\foreach \x in {0,1,2,...,6}{
\draw (0,0) -- (360/7*\x:4) ;
}

\draw[fill] (0,0) circle (0.15);

\end{tikzpicture}
 \quad
    \begin{tikzpicture}[scale=0.7*0.725]

\foreach \x in {0,1,2,...,6}{
\foreach \y in {1,2,3}{
\draw  (360/7*\x:2)--(360/7*\x+360/7*\y:2) ;
}
\draw  (360/7*\x:2)--(360/7*\x:4) ;
\draw[fill] (360/7*\x:2) circle (0.15);
}

\end{tikzpicture}
    
    \caption{Arc-graph of $K_4$ (left) and local modification of a degree $7$ vertex (middle and right)}
    \label{fig:arcgraph_K4}
\end{figure}

\section{Necessary condition for \v Solt\'es' graphs}\label{sec:res_graphs}

Forbidding an isolated vertex as a \v Solt\'es' graph, and remarking that if $u$ is a pendent vertex of a connected graph $G$, we have $W(G \setminus u)<W(G)$, we know that the minimum degree ($\delta$) of a \v Solt\'es' graph is at least $2$.
Here we prove that if $G \not= C_{11}$ is a \v Solt\'es' graph, then $\delta(G)\ge 3.$
This is a stronger version of a result by Dragan Stevanovi\'c (private communication), that a \v Soltes' graph different from $C_{11}$ cannot have internal paths with $3$ consecutive degree-$2$-vertices.
In the proofs, we denote $G_v$ for the graph $G \setminus v$.

\begin{prop}\label{prop_delta_ge3}
 A graph for which $\delta(G)=2$ and $W(G \setminus v)$ is constant for all $v \in V(G),$ is a cycle.
\end{prop}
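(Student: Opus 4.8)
The plan is to suppose $G$ is not a cycle and derive a contradiction; write $z$ for the common value $W(G)-W(G\setminus v)$.

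\emph{Reductions.} Any connected graph has at least two vertices whose deletion leaves it connected (for instance the two endpoints of a diametral path), so constancy of $W(G\setminus v)$ forces every such value to be finite, i.e.\ $G$ is $2$-connected. Since $\delta(G)=2$ and $G$ is not a cycle, $G$ has a vertex of degree $\ge 3$, and its degree-$2$ vertices split into internally disjoint \emph{segments}: maximal paths $p=w_0,w_1,\dots,w_k,w_{k+1}=q$ with $k\ge1$, $\deg w_i=2$ for $1\le i\le k$ and $\deg p,\deg q\ge3$ (possibly $p=q$). Fix such a segment and set $H:=G-\{w_1,\dots,w_k\}$, $D:=d_H(p,q)$; since $G$ is $2$-connected, $H$ is connected.

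\emph{A formula along a segment.} Deleting $w_j$ leaves $H$ together with a pendant path on $j-1$ vertices rooted at $p$ and a pendant path on $k-j$ vertices rooted at $q$; pendant paths create no shortcuts, so $H$ sits isometrically inside $G\setminus w_j$ and every distance there is explicit. Summing over the four kinds of vertex pairs (both in $H$; both on one pendant path; one on a pendant path and one in $H$; one on each pendant path) gives, writing $s=j-1$, $t=k-j$ and $\sigma_H(x):=\sum_{h\in V(H)}d_H(x,h)$,
\[
W(G\setminus w_j)=C-(|H|-D-1)\,s\,t+(\sigma_H(p)-\sigma_H(q))\,s,
\]
where $C,|H|,D,\sigma_H(p),\sigma_H(q)$ are independent of $j$ and $|H|-D-1\ge0$ because a $p$--$q$ geodesic of $H$ already has $D+1$ vertices. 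Hence $j\mapsto W(G\setminus w_j)$ is a convex (possibly linear or constant) quadratic on $\{1,\dots,k\}$.

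\emph{Segments of length $\ge 3$.} If $k\ge3$ then $s$ takes at least three values, so constancy forces this quadratic to be identically constant; in particular $|H|=D+1$. Then $V(H)$ is exactly a $p$--$q$ geodesic, and since any chord of it would shorten $d_H(p,q)$, $H$ is a path. Counting edges, $|E(G)|=D+(k+1)=|V(G)|$, so $G$ is unicyclic, hence -- being $2$-connected -- a cycle, contradicting $\deg p\ge3$. Thus every segment has length $k\le2$; this reproves, in sharpened form, Stevanovi\'c's ``no three consecutive degree-$2$ vertices.''

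\emph{Segments of length $1$ and $2$: the main obstacle.} Here the quadratic degenerates (for $k=2$ it only yields $\sigma_H(p)=\sigma_H(q)$), so a new lever is needed: deletion never shortens a distance, so $z=W(G)-W(G\setminus u)\le 2\sigma(u)$ for all $u$, with equality exactly when no pair of other vertices has all its shortest paths through $u$; hence $\sigma(u)\ge z/2$ for every $u$, with equality whenever $u$ is ``inessential'' in this sense. A degree-$2$ vertex $w$ with neighbours $a,b$ is inessential precisely when deleting it does not increase $d(a,b)$ (equivalently $ab\in E$, or $a,b$ have a common neighbour $\ne w$); then $\sigma(w)=z/2$ is the minimum transmission of $G$, and one expects a direct computation -- comparing $\sigma(w)$ with the transmission of a neighbour of $w$, or of that common neighbour -- to force $G\in\{C_3,C_4\}$, contradiction. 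In the complementary case every degree-$2$ vertex strictly lengthens the distance between its neighbours upon deletion, and one plays such a $w$ off against a vertex of maximum transmission, using $\sigma_H(p)=\sigma_H(q)$ and the now very rigid structure of the short segments. I expect essentially all the difficulty to sit in this last paragraph: there is no convexity to exploit, the ``inessential'' subcase borders on the genuinely admissible small cycles (and on transmission-regular graphs), and controlling how the forced detours interact with the rest of $G$ is the delicate point.
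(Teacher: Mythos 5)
Your argument is incomplete, and you say so yourself: the cases of segments of length $k=1$ and $k=2$ are never actually resolved. The convexity computation along a segment is correct (the coefficient of $st$ does work out to $-(|H|-D-1)$, and for $k\ge 3$ this does force $H$ to be a geodesic path and hence $G$ a cycle), but that only recovers Stevanovi\'c's ``no three consecutive degree-$2$ vertices.'' For $k=1$ the formula gives no information at all, and for $k=2$ it gives only $\sigma_H(p)=\sigma_H(q)$. The final paragraph replaces a proof with a programme (``one expects a direct computation\dots to force $G\in\{C_3,C_4\}$,'' ``one plays such a $w$ off against a vertex of maximum transmission''), and these are precisely the configurations that carry all the difficulty: a single degree-$2$ vertex wedged between two branch vertices is the generic picture, not a degenerate one. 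As a smaller point, your inequality $W(G)-W(G\setminus u)\le 2\sigma(u)$ should read $W(G)-W(G\setminus u)\le\sigma(u)$ under the unordered-pair convention the paper uses (it writes $W(G)=2\binom{n}{2}-m$ for diameter-$2$ graphs), so the ``inessential vertex'' criterion in your sketch would need to be restated; but since that paragraph is not a proof anyway, this is secondary.

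For comparison, the paper avoids the segment decomposition entirely with a purely local three-vertex argument: take any degree-$2$ vertex $p$ with neighbours $a,b$, set $X=V\setminus\{a,p,b\}$, and evaluate $2W(G_p)-W(G_a)-W(G_b)$ term by term. Using $d_{G_a}(p,b)=d_{G_b}(a,p)=1$, the fact that $G_p$-distances within $X$ are at most the corresponding $G_a$- and $G_b$-distances, and the bound $\sum_{w\in X}\bigl(d_{G_p}(a,w)+d_{G_p}(b,w)\bigr)\le\sigma(a)+\sigma(b)$, one gets $0=2W(G_p)-W(G_a)-W(G_b)\le 2d_{G_p}(a,b)-2-2\abs{X}\le 0$; equality forces $d_{G_p}(a,b)=\abs{X}+1$, i.e.\ $G_p$ is a Hamiltonian $a$--$b$ path and $G$ is a cycle. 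This disposes of all segment lengths at once, which is exactly what your approach fails to do. If you want to salvage your route, the missing ingredient is an argument for $k\in\{1,2\}$ of comparable strength; the paper's comparison of $W(G_p)$ with $W(G_a)$ and $W(G_b)$ is a natural candidate to graft in, but at that point the segment machinery becomes unnecessary.
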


\begin{proof}
 Let $G=(V,E)$ be such a graph with $\delta(G)=2$, let $p$ be a vertex with degree $2$ and $a,b$ be its two neighbours.
 We need $W(G)=W(G_a)=W(G_p)=W(G_b).$
 Let $X=V \setminus \{a,p,b\}.$
 Let $H_a=G[X \cup a] $ and $H_b=G[X \cup b].$
 Let $\sigma(a)= \sum_{w \in X} d_{H_a}(a,w)$ and $\sigma(b)= \sum_{w \in X} d_{H_b}(b,w).$
 
 We are now ready to compare $W(G_a), W(G_p)$ and $W(G_b).$
 For this, we split these total distances in multiple parts.
 First observe that
 $d_{G_p}(a,b)\le \lvert X \rvert +1$ with equality if and only if $G=C_n,$ while
 $d_{G_a}(b,p)=d_{G_b}(a,p)=1.$
 Next, we note that for every choice of $x, y \in X$,
 $$d_{G_p}(x,y) \le d_{G_a}(x,y), d_{G_b}(x,y).$$
 
 Finally, since $H_a, H_b \subset G_p$ we note that
 $$\sum_{w \in X} \left( d_{G_p}(a,w) + d_{G_p}(b,w) \right) \le \sigma(a)+\sigma(b) $$
 $$\sum_{w \in X} \left( d_{G_a}(p,w) + d_{G_a}(b,w) \right) = 2\sigma(b)+\lvert X \rvert $$
 $$\sum_{w \in X} \left( d_{G_b}(a,w) + d_{G_b}(p,w) \right) = 2\sigma(a)+\lvert X \rvert. $$
 
 Using these observations, we conclude that 
 \begin{align*}
 0=&2W(G_p)-W(G_a)-W(G_b)\\
 =&2d_{G_p}(a,b)-d_{G_a}(p,b)-d_{G_b}(a,p)
 +\sum_{x,y \in X} \left( 2d_{G_p}(x,y) - d_{G_a}(x,y)- d_{G_b}(x,y) \right)\\
 &+\sum_{w \in X} \left(
 2d_{G_p}(a,w) + 2d_{G_p}(b,w)
 -d_{G_a}(p,w) - d_{G_a}(b,w)
 -d_{G_b}(a,w) -d_{G_b}(p,w) \right)\\
 \le& 2d_{G_p}(a,b)-2
 -2 \lvert X \rvert\\
 \le& 0.
 \end{align*}
 Since equality need to be attained in every step, we have 
 $d_{G_p}(a,b)=1+ \lvert X \rvert,$ which implies that $G_p$ is a path from $a$ to $b$, and thus $G$ is a cycle.
\end{proof}

\begin{prop}\label{prop:diam_ge3}
 A graph $G$ of order $n>1$ with $\diam(G)\le 2$ has at least one vertex $v$ for which either $G \setminus v$ is disconnected, or $W(G \setminus v)<W(G).$
\end{prop}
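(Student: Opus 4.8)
The plan is to argue by contradiction: suppose that for \emph{every} vertex $v$ the graph $G\setminus v$ is connected and $W(G\setminus v)\ge W(G)$. First I dispose of trivial cases: $n=2$ gives $G=K_2$ with $W(G\setminus v)=0<1$, and $\diam(G)=1$ gives $G=K_n$ with $W(K_{n-1})<W(K_n)$, so we may assume $\diam(G)=2$ and, since no $G\setminus v$ is disconnected, that $G$ is $2$-connected. For any $v$ a short computation (as in the proof of \cref{prop_delta_ge3}) gives $W(G)-W(G\setminus v)=\sigma(v)-\Delta_v$, where $\Delta_v=\sum_{\{u,w\}\in P_v}\bigl(d_{G\setminus v}(u,w)-2\bigr)$ and $P_v$ is the set of pairs $\{u,w\}\subseteq N(v)$ with $u\not\sim w$ whose unique common neighbour is $v$ — precisely the pairs whose distance strictly increases when $v$ is deleted, their old distance being $2$ because $\diam(G)=2$. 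Since $\diam(G)=2$ we have the clean identity $\sigma(v)=2(n-1)-\deg(v)$, so the contradiction hypothesis reads $\Delta_v\ge 2(n-1)-\deg(v)$ for all $v$; in particular $P_v\neq\emptyset$ for every $v$.

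Now I specialise to a vertex $v$ of minimum degree $\delta$. The key local lemma is that for such $v$ and any $\{u,w\}\in P_v$ one has $d_{G\setminus v}(u,w)=3$: if the distance were $\ge 4$, then for every neighbour $a$ of $u$ with $a\ne v$ one checks that $a\not\sim w$ and that every common neighbour of $a$ and $w$ must be $v$ (otherwise a $u$--$w$ walk of length $3$ avoiding $v$ appears); hence $a\sim v$, so $N(u)\subseteq N[v]$, which together with $u,w\in N(v)$ and $u\not\sim w$ forces $\deg(u)\le\delta-1<\delta$, a contradiction. Consequently $\Delta_v=|P_v|$, and the contradiction hypothesis becomes
\[
|P_v|\ \ge\ 2(n-1)-\delta .
\]

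To finish I would bound $|P_v|$ from above. The basic input is that for $\{a,b\}\in P_v$ the closed neighbourhoods $N[a]$ and $N[b]$ meet only in $v$, so $N[a]\setminus\{v\}$ and $N[b]\setminus\{v\}$ are disjoint subsets of $V\setminus\{v\}$ of size $\ge\delta$ each. Viewing $P_v$ as a graph $Q$ on the $\delta$-set $N(v)$, this shows every clique of $Q$ has size at most $(n-1)/\delta$, so Turán's theorem yields $|P_v|\le\bigl(1-\tfrac{\delta}{n-1}\bigr)\tfrac{\delta^{2}}{2}$; together with the trivial bound $|P_v|\le\binom{\delta}{2}$ this already contradicts $|P_v|\ge 2(n-1)-\delta$ when $\delta$ is small (of order $\sqrt n$). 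At the opposite extreme, if $\delta\ge n/2$ then any two non-adjacent vertices have at least two common neighbours, so $P_v=\emptyset$ for \emph{all} $v$ and the deletion of any vertex preserves connectedness while strictly decreasing $W$ by $\sigma(v)\ge n-1$.

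The main obstacle is the intermediate range $\sqrt n\lesssim\delta<n/2$, where the Turán bound alone is not quite strong enough. Here I would push the neighbourhood-disjointness further: in the near-extremal situation $Q$ is forced to be close to a disjoint union of complete bipartite graphs whose parts are ``twin'' sets inside $N(v)$ with complementary neighbourhoods, and one can then count the vertices of $V\setminus N[v]$ against these structures, or alternatively handle such highly structured $G$ directly by comparing $W(G)$ with $W(G\setminus v)$ for a well-chosen non-minimum-degree vertex. Making this step clean — and absorbing the finitely many small orders $n$ that escape the asymptotic estimates — is where the real work lies.
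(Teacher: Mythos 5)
Your reduction is sound as far as it goes: the identity $W(G)-W(G\setminus v)=\sigma(v)-\Delta_v$ with $\sigma(v)=2(n-1)-\deg(v)$, the observation that only pairs of neighbours of $v$ with $v$ as unique common neighbour can have their distance increase, and the disposal of the extremes $\delta\lesssim\sqrt n$ (via $|P_v|\le\binom{\delta}{2}$) and $\delta\ge n/2$ (two non-adjacent vertices then share two common neighbours) are all correct. But the argument is not a proof: you explicitly leave open the range $\sqrt n\lesssim\delta<n/2$, and there the Tur\'an-type bound on $|P_v|$ at a single minimum-degree vertex is hopeless --- for $\delta=n/3$ your upper bound on $|P_v|$ is of order $n^2$ while the lower bound you need to contradict is only of order $n$, so no contradiction is forthcoming from the local count alone. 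The sketched repair (``$Q$ is close to a union of complete bipartite graphs \ldots'') is not carried out, so this is a genuine gap, not a routine verification.

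The missing idea is to stop working locally at one vertex and instead double count globally. The paper's proof shows (exactly as you do) that for each $v$ the total distance increase is contributed only by pairs in $P_v$, that each such pair increases by at most $2$ (since any $u\in N(v)$ reaches any $w\notin N[v]$ in $G\setminus v$ within distance $2$, giving $d_{G\setminus v}(u,u')\le 4$), and hence that the contradiction hypothesis forces $|P_v|\ge\tfrac12\sigma(v)\ge\tfrac n2$ for \emph{every} $v$. Summing over all $v$ then gives at least $\tfrac{n^2}{2}$ pairs, but each pair at distance $2$ with a unique common neighbour is counted for at most one $v$, so the sum is at most $\binom n2<\tfrac{n^2}{2}$ --- a contradiction with no case analysis on $\delta$ at all. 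Your finer local lemma that $d_{G\setminus v}(u,w)=3$ at a minimum-degree vertex is correct but unnecessary once one counts globally; if you want to complete your write-up, replacing the entire ``bound $|P_v|$ from above'' step by this summation over all $v$ is the clean way to do it.
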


\begin{proof}
This is obvious for a clique (diameter $1$ graph).
 So now assume $\diam(G)=2.$
 First we observe that $G$ cannot have a universal vertex $v$, since for a vertex $u \not= v$ $W(G)=2\binom{n}{2}-m>2\binom{n-1}{2}-m+\deg(u)=W(G_u).$
 So $\rad(G)>1$, i.e. for every $v \in V$ its closed neighbourhood $N[v]$ is not equal to $V.$
 The only pairs of vertices $u,u' \in V(G_v)$ for which $ d_{G_v}(u,u')> d_{G}(u,u'),$ have to be both neighbours of $v$ in $G$ (and have no other common neighbour).
 
 Note that for every $u\in N(v)$ and $w \not \in N[v]$, 
 $ d_{G_v}(u,w)= d_{G}(u,w)\le 2. $
 This implies that $ d_{G_v}(u,u')\le d_{G_v}(u,w)+d_{G_v}(w,u')\le 4$
 and thus $ d_{G_v}(u,u')- d_{G}(u,u')\le 2.$
 
 Since $\sum_{u \in V \setminus v} d(u,v)=2(n-1)-\deg(v) \ge n$, has to be lower bounded by $\sum_{ u,u'\in V \setminus v} \left( d_{G_v}(u,u')- d_{G}(u,u') \right),$
 there are at least $\frac{n}{2}$ pairs of vertices $(u,u')$ for which the unique shortest path uses $v$.
 
 Summing over all $v \in V$, this implies that there are at least $\frac{n^2}{2}>\binom{n}{2}$ pairs of vertices at distance $2$ in $G$, which is a contradiction.
\end{proof}

For self-centric graphs (graphs with $\diam=\rad$), e.g. vertex-transitive graphs, one can also exclude the diameter to be $3.$

\begin{prop}\label{prop:diam_ge4}
 A graph $G$ of order $n>1$ with $\diam(G)=\rad(G)= 3$ has at least one vertex $v$ for which either $G \setminus v$ is disconnected, or $W(G \setminus v)<W(G).$
\end{prop}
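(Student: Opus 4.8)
The plan is to follow the counting strategy of \cref{prop:diam_ge3}. Assume, for contradiction, that for every $v \in V$ the graph $G_v$ is connected and $W(G_v) \ge W(G)$. Since $\rad(G) = \diam(G) = 3$, every vertex of $G$ has eccentricity exactly $3$; writing $N_i(v)$ for the set of vertices at distance $i$ from $v$, we thus have $V \setminus v = N_1(v) \cup N_2(v) \cup N_3(v)$ with $N_3(v) \neq \emptyset$ for every $v$.

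First I would pin down which pairs can increase in distance when $v$ is removed. If $d_{G_v}(u,u') > d_G(u,u')$, then $v$ lies on every shortest $u$--$u'$ path, so $d(u,v) + d(v,u') = d(u,u') \le 3$ with both summands at least $1$. Hence the pair $\{u,u'\}$ is of one of two \textbf{types}: \textbf{(1)} $u,u' \in N_1(v)$ and $d(u,u') = 2$; or \textbf{(2)} one endpoint lies in $N_1(v)$, the other in $N_2(v)$, and $d(u,u') = 3$. To bound the enlarged distance of such a pair, fix any $w \in N_3(v)$. For $x \in N_1(v) \cup N_2(v)$, no shortest $x$--$w$ path uses $v$, since it would have length at least $1 + 3 > 3 = \diam(G)$; therefore $d_{G_v}(x,w) = d_G(x,w) \le 3$. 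Applying this to both endpoints, $d_{G_v}(u,u') \le d_{G_v}(u,w) + d_{G_v}(w,u') \le 6$, so deleting $v$ increases the distance of a type-(1) pair by at most $4$ and of a type-(2) pair by at most $3$.

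Let $B_1(v)$ and $B_2(v)$ denote the numbers of type-(1) and type-(2) pairs for $v$. Since the pairs meeting $v$ contribute $2\sigma(v)$ to $W(G)$ and nothing to $W(G_v)$, we get $W(G) - W(G_v) = 2\sigma(v) - 2\sum_{\{u,u'\} \subseteq V \setminus v}(d_{G_v}(u,u') - d_G(u,u'))$, so the hypothesis $W(G_v) \ge W(G)$ forces $\sigma(v) \le 4B_1(v) + 3B_2(v)$. Now I would sum this over all $v$. On the left, $\sum_v \sigma(v) = W(G) = 2m + 4p_2 + 6p_3$, where $m = |E(G)|$ and $p_i$ is the number of unordered vertex pairs at distance $i$. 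On the right, a distance-$2$ pair is of type (1) only for its unique common neighbour (if it has one), so $\sum_v B_1(v) \le p_2$; a distance-$3$ pair is of type (2) only for a vertex lying on \emph{all} of its length-$3$ shortest paths, of which there are at most two (at most one adjacent to each endpoint), so $\sum_v B_2(v) \le 2p_3$. Combining, $2m + 4p_2 + 6p_3 \le 4p_2 + 6p_3$, i.e.\ $m \le 0$, which is absurd.

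The case analysis and the detour bound are routine; the step I expect to be most delicate is the global double counting, in particular the claim $\sum_v B_2(v) \le 2p_3$, for which one must check that a distance-$3$ pair can be ``forced'' through at most two vertices. There is essentially no room to spare here: for $C_7$ (which satisfies the hypothesis) every one of these estimates is an equality, so the contradiction survives only because $m > 0$. A final thing to keep straight is the factor-of-two bookkeeping between the per-vertex inequality and its sum, i.e.\ whether $W$ is read over ordered or unordered pairs.
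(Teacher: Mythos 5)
Your proposal is correct and follows essentially the same strategy as the paper's proof: use a vertex at distance $3$ from $v$ (guaranteed by $\rad(G)=3$) to cap the detour distances in $G\setminus v$, then double-count over all $v$, comparing $\sum_v \sigma(v)$ with the total possible increase, using that a distance-$2$ pair is forced through at most one vertex and a distance-$3$ pair through at most two. The only difference is that the paper refines the detour bound for distance-$3$ pairs to $5$ (via the neighbour of the antipodal vertex) where you settle for $6$, but your coarser bound still yields the contradiction $2m\le 0$.
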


\begin{proof}
    Assume $G$ has connectivity at least two, i.e., $G \setminus v$ is connected for every $v \in V(G)$. 

    Let $v \in V(G)$ and let $v'$ be an antipodal vertex, that is, $d(v,v')=3.$
    If $a,b$ are two vertices with $d(a,b)=d(a,v)+d(v,b)=2$, we note that $d_{G_v}(a,b)\le d_{G_v}(a,v')+d_{G_v}(b,v')\le 2 \cdot 3=6.$

    If $a,b$ are two vertices with $d(a,b)=3$, $d(a,v)=1$ and $d(v,b)=2$,
    we can consider a shortest path between $a$ and $v'$ and let $w$ be the neighbour herein of $v'.$
    Then $d_{G_v}(a,b)\le d_{G_v}(a,w)+d_{G_v}(b,w)\le 2 +3=5.$
    
    Assume there are $x$ pairs of vertices at distance $2$, and $y$ pairs of vertices at distance $3$.
    If $d(a,b)=2,$ there is at most one vertex $v$ for which $d_{G_v}(a,b)>d(a,b)$ and the increase is at most $6-2=4$.
    If $d(a,b)=3,$ there are at most two vertices whose removal increase the distance between $a$ and $b$, hereby the increase is bounded by $5-3=2.$
    The total increase, $\sum_{ u,u'\in V \setminus v} \left( d_{G_v}(u,u')- d_{G}(u,u') \right),$ is thus bounded by $4(x+y).$
    On the other hand, the sum of the transmissions over all vertices, which equals $2W(G)$, is strictly above $2(2x+3y).$
    As such, there is a vertex $v$ for which $W(G \setminus v)<W(G).$
\end{proof}

\begin{prop}\label{prop:twinfree}
 A graph $G$ for which $W(G)\le W(G_v)$ for every $v \in V(G)$ is twin-free. More generally, there are no $v,v'\in G$ for which $N(v) \subseteq N(v').$
\end{prop}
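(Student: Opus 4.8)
The plan is to prove the stronger statement directly and get twin-freeness as a corollary. Suppose, for contradiction, that there are distinct vertices $v,v'$ with $N(v)\subseteq N(v')$. First observe that $v$ and $v'$ must be non-adjacent: if $v'\in N(v)$ then $v'\in N(v')$, impossible in a simple graph. Since $G$ is connected and $n>1$, $N(v)\neq\emptyset$, so $v$ and $v'$ in fact share a common neighbour. The goal is then to show $W(G_v)<W(G)$, contradicting $W(G)\le W(G_v)$.

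The heart of the argument is a rerouting lemma: removing $v$ does not increase any distance between two vertices of $V\setminus\{v\}$, because $v'$ can take over the role of $v$ on any shortest path. Concretely, I would fix $u,w\in V\setminus\{v\}$ and a shortest $u$--$w$ path in $G$; if it avoids $v$ there is nothing to do, and if it passes through $v$ I write it as $u\cdots a\,v\,b\cdots w$ with $a,b\in N(v)\subseteq N(v')$. Replacing the single vertex $v$ by $v'$ gives a walk $u\cdots a\,v'\,b\cdots w$ of the same length inside $G_v$ (if $v'$ already occurred on the path one even gets a strictly shorter walk). Hence $d_{G_v}(u,w)\le d_G(u,w)$, the reverse inequality being trivial, so all these distances are preserved; in particular $G_v$ is still connected, so $W(G_v)$ is finite.

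Granting that all pairwise distances among $V\setminus\{v\}$ are unchanged, one gets the clean identity $W(G_v)=W(G)-\sigma(v)$, where $\sigma(v)=\sum_{w\neq v}d_G(v,w)\ge n-1\ge 1$. Therefore $W(G_v)<W(G)$, the desired contradiction. Twin-freeness then follows: two false twins $v,v'$ satisfy $N(v)=N(v')$, hence $N(v)\subseteq N(v')$; and for two true twins (where $N[v]=N[v']$, so $v\sim v'$) essentially the same rerouting works --- one only has to note that on a shortest path through $v$ the neighbours $a,b$ cannot equal $v'$, since then the edge $v'b$ or $av'$ would give a shortcut, so again $a,b\in N(v')$ and $v'$ substitutes for $v$.

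The step needing the most care is the rerouting lemma: I must make sure the neighbours $a,b$ on the two sides of $v$ are genuine edges of $G$ and that substituting $v'$ never makes the walk longer, which is exactly where the non-adjacency of $v,v'$ and the minimality of the shortest path are used; the analogous care (ruling out $a=v'$ or $b=v'$) is what extends the conclusion from open-neighbourhood containment to the true-twin case, and hence to full twin-freeness.
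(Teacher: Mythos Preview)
Your argument is correct and is essentially the same rerouting idea as the paper's proof: if $N(v)\subseteq N(v')$, then on any shortest $x$--$y$ path through $v$ one can substitute $v'$ for $v$, so $d_{G_v}(x,y)=d_G(x,y)$ for all $x,y\neq v$ and hence $W(G_v)=W(G)-\sigma(v)<W(G)$. Your version is simply more explicit, and your separate treatment of true twins (ruling out $a=v'$ or $b=v'$ via the shortcut argument) in fact goes a step beyond the paper, whose ``more general'' open-neighbourhood statement does not literally cover the case $N[v]=N[v']$.
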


\begin{proof}
 If not, for every $2$ vertices $x,y \in G_v= G \setminus v$ for which there exists a shortest path in $G$ using $v$, there is also one which uses $v'.$
 Thus $d_G(x,y)=d_{G_v}(x,y)$ for every $x,y \in G_v$, implying that $W(G)>W(G_v)$. 
\end{proof}

\section{A conditional construction for partial \v Solt\'es' graphs }\label{sec:conditionalconstruction}

An $\alpha$-\v Solt\'es' graph, is a graph for which at least an $\alpha$-fraction of its vertices are \v Solt\'es' vertices.
In~\cite[abstract]{BKS23} and~\cite[Sec.~6]{KST23}, it is mentioned that the only known $\frac 13$-\v Solt\'es' graphs were truncated cubic graphs at that point (neglecting the quartic example from~\cite{BKS23} obtained by taking the line graph of a certain cubic graph).
Meanwhile, by~\cite{AOVVVY23,DV24} more examples are known.
In this section, we give some other (conditional) examples.
We start by the following explicit example, which will be informative for a conditional construction with larger $\alpha$.

\begin{exam}
    There exists a modified quartic ($4$-regular) $\alpha$-\v Solt\'es' graph for $\alpha =\frac{27}{56} = \frac 12 -\frac 1{56}.$
\end{exam}

\begin{proof}
    For this, we take the arc-graph $G$ of the Moore graph $M$ with degree/ valency $4$ and girth $12$/ diameter $6$. 
    Note that the order of $M$ is $n(M)=2 \sum_{i=0}^{5}3^i=3^6-1=728$ and $n(G)=4(3^6-1)=2912.$
    For a fixed edge $e \in G,$ we have that there are $2\cdot 3^5\cdot3=2\cdot 3^6=1458$ many vertices at distance $11$ from $e.$
    For every $v \in V(G),$ $W(G)-W(G\setminus v)=29896-37356=-7460.$
    Noting that $7460=23+ \sum_{k=12}^{122} k, $
    we finally construct a graph $Q$ by connecting an end vertex $v_{12}$ of a $P_{111}$ (with vertices $v_k, 12 \le k \le 122$) to both endvertices of an edge $e$ of $G$, and finally add a pendent vertex $u_{23}$ to $v_{22}.$
    Then $n(Q)=2912+112=3024$
    and $W(Q)-W(Q \setminus v)=0$ for every $v \in V(G) \subset V(Q)$ which is at distance $11$ from $e.$
    This gives a $\frac{1458}{3024}=\frac{27}{56}$ ratio of vertices of $Q$ which are \v Soltes' vertices.
\end{proof}


The following proposition gives some intuition that one can expect that a large graph exists of which more than a $\frac 23$-fraction of its vertices are \v Solt\'es' vertices, which would answer~\cite[Prob.~1]{DV24}. 
Nevertheless, it is related with the very hard open problem on estimating the minimum order of a cage.
Note that in~\cite{EJMS19} a lower bound on the difference has been shown that is negligible towards our aims, but no good upper bound is known either~\cite{LUW96}.

The underlying idea is broader; if there is a large proportion of the vertices for which $W(G\setminus v)-W(G)$ equals the same positive constant, and we can add a little substructure to compensate evenly over them, then there is a graph with a large proportion of \v Solt\'es' vertices.

\begin{prop}
    If there exists a $7$-regular graph with even girth $g \ge 58$ which
    is arc-transitive and has roughly the Moore bound $N(7,g)$ many vertices, e.g. order no more than $1.0005 N(7,g)$, 
    then there are graphs with roughly a $\frac 57$\footnote{The exact value depends on how well the Moore bound is approximated} fraction \v Solt\'es' vertices.
\end{prop}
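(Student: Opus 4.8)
The plan is to mirror the explicit quartic construction from the preceding example, but starting from a $7$-regular arc-transitive graph $M$ of even girth $g\ge 58$ whose order is close to the Moore bound. The key structural fact, exactly as in the quartic case, is that the arc-graph $G$ of $M$ is \emph{vertex-transitive} (this uses arc-transitivity of $M$, since the automorphism group acts transitively on arcs of $M$, hence on vertices of $G$); consequently $W(G)-W(G\setminus v)$ is a constant $-c$ with $c>0$, independent of $v\in V(G)$. Moreover, because $M$ has even girth $g$, for a fixed edge $e$ of $G$ a large proportion of the vertices of $G$ — those at the ``antipodal-like'' distance $g-1$ from the pair of endvertices of $e$ — behave identically; the count of such vertices, divided by $n(G)$, tends to $\tfrac{5}{7}$ as the order of $M$ approaches the Moore bound (the ratio $5/7$ comes from the branching: at each of the $g-1$ steps there are $6$ choices except the first, and one localises at a single edge among the $7\cdot 3^{\,g/2-1}$-type count — the precise constant depends on how tightly the Moore bound is approximated, whence the footnote).

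First I would record $n(M)=2\sum_{i=0}^{g/2-1}6^i$ in the exact Moore case and note $n(G)=7\,n(M)$, together with the fact that the number of vertices of $G$ at distance $g-1$ from a fixed edge $e$ is $2\cdot 6^{\,g/2-1}\cdot 3$ (again exactly in the Moore case, approximately otherwise), so that the \v Solt\'es' ratio is $\dfrac{2\cdot 3\cdot 6^{\,g/2-1}}{7\cdot 2\sum_{i=0}^{g/2-1}6^i}$, which $\to \tfrac{5}{7}\cdot\tfrac{5}{6}\cdot\dots$ — I would just compute this limit and observe it is $\tfrac57$ up to the approximation error, matching the claimed fraction. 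Next I would compute the constant $c=W(G\setminus v)-W(G)>0$ in closed form in terms of $g$ and $n(M)$ (a finite sum over distance layers, entirely analogous to the ``$29896-37356=-7460$'' computation), and then invoke the compensation gadget: write $c$ as $r+\sum_{k=\ell}^{m}k$ for suitable integers $0\le r<\ell\le m$, attach a path $P_{m-\ell+1}$ by identifying/joining one endpoint to both endpoints of a fixed edge $e$ of $G$, and hang a pendant vertex at the appropriate interior vertex to absorb the remainder $r$. One must check (as in the example) that this does not disturb the transmission balance at the distinguished vertices of $G$: for every $v\in V(G)$ at distance $g-1$ from $e$, the added structure contributes the same amount to $\sigma_{Q}(v)$ and to $\sigma_{Q}(v')$ for each added vertex $v'$, so that $W(Q)-W(Q\setminus v)=\bigl(W(G)-W(G\setminus v)\bigr)+c=0$. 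Finally, the number of added vertices is $O(g^{?})=O\!\bigl((\log n(G))^{O(1)}\bigr)$, negligible against $n(G)$, so the \v Solt\'es' ratio of $Q$ is still $\tfrac57-o(1)$, and by choosing $g$ large (and the order within $1.0005\,N(7,g)$) it exceeds $\tfrac23$.

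The main obstacle is purely arithmetic bookkeeping rather than conceptual: one must verify that $c$, the common value of $W(G\setminus v)-W(G)$, is a positive integer large enough (which it is, growing with $g$) and of the right parity/size so that the ``$r+\sum_{k=\ell}^{m}k$'' decomposition with a single pendant-vertex correction genuinely works — in the quartic example $7460=23+\sum_{k=12}^{122}k$ fit perfectly, and for the $7$-regular case one expects the same flexibility since triangular numbers are dense enough and $r$ can be taken to be a small pendant-vertex transmission. A secondary subtlety is confirming that when $M$ is only \emph{approximately} Moore (order $\le 1.0005\,N(7,g)$ rather than exactly $N(7,g)$), the count of vertices of $G$ at distance $g-1$ from $e$ is still a $\bigl(\tfrac57-\varepsilon(g)\bigr)$-fraction with $\varepsilon(g)$ controlled by the density gap — here one uses that girth $g$ forces the first $g/2-1$ distance layers from any vertex of $M$ to be full $6$-ary, so only the top layer is affected by the slack, giving the stated dependence on the approximation quality and justifying the footnote. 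I would then conclude that, conditional on the existence of such near-Moore arc-transitive $7$-regular graphs, $\tfrac57$-\v Solt\'es' graphs (hence in particular $>\tfrac23$) exist, answering~\cite[Prob.~1]{DV24} in the conditional sense claimed.
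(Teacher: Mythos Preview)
Your overall architecture matches the paper's: pass to the arc-graph $G'=A(M)$, use arc-transitivity of $M$ to get vertex-transitivity of $G'$, append a small gadget to a fixed edge, and count the vertices at distance $g-1$ from that edge. However, you treat the heart of the argument as a routine closed-form computation ``entirely analogous to the $29896-37356=-7460$ computation'', and this is where the proposal breaks down. In the quartic example $M$ was the actual $(4,12)$ Moore graph, so every distance in $A(M)$ is determined and one can literally evaluate $W(G')-W(G'\setminus v')$. Here $M$ is only hypothetical and only \emph{approximately} Moore, so there is no closed form for $c$: it depends on the unknown structure of $M$ beyond radius $k-1$. What the paper actually does is prove the inequality $W(G'\setminus v')-W(G')>16k^2$ by (i) lower-bounding $\mathrm{diff}(G',v')=\sum_{w_1,w_2}\bigl(d_{G'\setminus v'}(w_1,w_2)-d_{G'}(w_1,w_2)\bigr)$ using that any non-clique cycle in $G'$ has length $\ge 2g$, so removing $v'$ forces detours of length at least $2(g-i)$ for each pair at distance $i<g$ through $v'$, and (ii) upper-bounding $\sigma(v')$ using $\diam(G')\le 2g$ and the $0.0005N$ slack. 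Balancing these two explicit sums in $k$ is exactly where the hypothesis $g\ge 58$ enters; your parenthetical ``(which it is, growing with $g$)'' is precisely the statement that needs proof, and without it one cannot even assert $c>0$, let alone decompose it.

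Two smaller points. First, your antipodal count $2\cdot 6^{g/2-1}\cdot 3$ copies the trailing ``$\cdot\,3$'' from the quartic case instead of ``$\cdot\,6$''; the correct count is $2\cdot 6^{k}$ (with $g=2k$), and your version would give ratio $\tfrac{5}{14}$ rather than $\tfrac{5}{7}$. Second, the compensating gadget is not polylogarithmic: since $c$ can be as large as $O(gN)$, the attached tree has order $O(\sqrt{gN})$, not $O\bigl((\log n(G))^{O(1)}\bigr)$. This is still $o(N)$, so the ratio conclusion survives, but the estimate you wrote is off. The paper handles the gadget via a general claim (append a path of length $\approx\sqrt{2c}$ plus one or two pendants) that works for any $c>16k^2$ without needing to know $c$ exactly, which is essential given that $c$ is not computable here.
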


\begin{proof}
    Assume such a graph $G$ exists, for girth $g=2k$.
    Remember that $N(7,g)=2\sum_{i=0}^{k-1} 6^i = 2\frac{6^{k}-1}{6-1}=2\frac{6^{k}-1}{5}.$
    Let $uv$ be an edge of $G$. Note that the edges at distance at most $k-1$ from either $uv$ in the line graph of $G$ span a tree $T$ (in $G$, with internal vertices having degree $7$).
    
    Let $G'=A(G)$ be the arc-graph of $G.$
    This graph is vertex-transitive, since $G$ was arc-transitive.

    Due to the girth condition of $G$, every cycle in $G'$ that is not part of a $K_7$, has length at least $2g.$
    For every $2$ vertices $w_1,w_2$ in $G'$ for which the shortest path (which has length $i$) between them uses a vertex $v' \in V(G')$, the shortest path between them in $G \setminus v'$ has length at least $2g-i.$
    If $i \le g-1,$ we know that this was initially the unique shortest path. Thus the difference of distances changed by at least $(2g-i)-i=2(g-i)$.
    Let $u'$ be the unique neighbour of $v'$ which is not part of the same $K_7$.
    Now we can compute a lower bound for 
    $$\diff(G',v')=\sum_{w_1, w_2 \in G'\setminus v'} d_{G'\setminus v'}  (w_1,w_2)- d_{G'}(w_1,w_2)$$
    by counting the number of pairs of vertices whose shortest paths use $v'$ and are at distance less than $g$ initially.
    Let $N_{u'}$ be the set of vertices which are closer to $u'$ than to $v'$, and similarly $N_{v'} =\{ w \in V(G') \colon d_{G'}(w,v')< d_{G'}(w,u') \}.$
    Let $N_{v'}^i=\{ w \in N_{v'} \colon d(v', w)=i\}$ and define $N_{u'}^i$ analogously.
    Note that $\abs{N_{v'}^{j}}=6^{ \ceilfrac j2}$ for every $0\le j \le g.$
    The number of pairs of vertices at even distance, $2i+2<g$, whose shortest path between them uses $v'$, is now equal to 
    $$\sum_{j=0}^{2i} \abs{ N_{u'}^j} \abs{ N_{v'}^{2i+1-j}} = (2i+1)\cdot 6^{i+1}. $$
    Similarly, for odd distance $2i+1$, we get $$\sum_{j=0}^{2i-1} \abs{ N_{u'}^j} \abs{ N_{v'}^{2i-j}} = i\cdot (6^{i}+6^{i+1})=7i\cdot 6^{i}. $$

    This results in

    \begin{align*}
        \diff(G',v')&\ge  2\sum_{i=0}^{k-2} (2i+1)\cdot 6^{i+1} \cdot (g-(2i+2)) + 2\sum_{i=1}^{k-1}  7i\cdot 6^{i} \cdot (g-(2i+1)) \\
        &= 2\frac{6^k(365k-606)+840k+606}{125}
    \end{align*}

    Let $N :=N(7,g)$. Note that $G$ has diameter bounded by $g$ (if not, take two edges at maximum distance and consider the two neighbourhoods of them, up to distance $k-1$). Hence $\diam (G') \le 2g=4k.$ 
    For any $v' \in V(G')$, we can approximate that
    \begin{align*}
        \sigma(v')= \sum_{w \in G'} d(w,v') &\le 2g\cdot 7 \cdot 0.0005 N +\sum_{i=0}^{g-1} \left( \abs{ N_{u'}^i}(i+1)+ i \abs{ N_{v'}^i} \right)\\
    &= 0.007 g N +1+ \sum_{i=1}^{k-1} (6^i \cdot 8i) +  6^k\cdot (4k-1)\\
    &=0.014 k N + \frac{8}{25}\left( (5k-6)6^k+6) \right) + 6^k(4k-1)\\
    &< \frac{0.14k 6^k}{25}  + \frac{6^k(140k-73)+73}{25}\\
    &=\frac{6^k(700.7k-365)+365}{125}
    \end{align*}

    Here we have used that $\abs{ N_{u'}^{2i-1}}= \abs{ N_{u'}^{2i}}=\abs{ N_{v'}^{2i-1}}= \abs{ N_{v'}^{2i}}=6^i$ and $(2i-1)+2\cdot 2i +(2i+1)=8i$ when $1 \le i \le k-1.$

    Finally, we notice that 
    $\diff(G',v')-\sigma(v')>10^3 k^2 $
    since $(29.3k-847)6^k-1680k-847>10^6 k^2$ for $k \ge 29$ and that $W(G'\setminus v')-W(G')=\diff(G',v')-\sigma(v').$

    The proof will now be essentially finished by the following claim.
    
\begin{claim}
    Let $G'$ be a vertex-transitive graph and satisfy $W(G'\setminus v')-W(G')=x>16k^2$ for some $k \in \mathbb N.$
    Then we can append a tree to $u'v'$ of order bounded by $\sqrt{2x}$ to form a graph $H$ satisfying $W(H \setminus w)-W(H )=0$ for every $w \in V(G')$ for which $d_{G'}(u'v', w)=2k-1$.    
\end{claim}
\begin{claimproof}
    Choose the largest $\ell$ such that $\binom{\ell}{2}-\binom{2k}{2} \le x.$
    Note that $\ell \ge 4k.$
    Let $y=x+\binom{2k}{2}-\binom{\ell}{2}.$
    If $y=0$ or $2k+1 \le y,$
    we append a path $P_{\ell}$ to $G'$ by connecting one end vertex of the path to both $u'$ and $v'$ and connect an additional pendent vertex to the path, such that its distance to $u'$ and $v'$ equals $y-(2k-1).$
    If $0<y<2k+1,$ append a path $P_{\ell-1}$ to $G'$ as before, and add two vertices such that the sum of their distances to $u'$ and $v'$ equals $\ell-1+y-2(2k-1).$
    Let the added tree be denoted with $T$.
    Now for every $w \in V(G')$ with $d_{G'}(u'v', w)=2k-1$, $$W(H \setminus w)-W(H )= W(G'\setminus w)-W(G')- \sum_{z \in V(T)} d_{H}(w,z)=0. \qedhere$$ 
\end{claimproof}
Since $x$ is clearly bounded by $\sigma_{G'}(w)=\sigma(w')<2g \cdot 8N,$
the number of vertices in $T$ is small.

There are $2 \cdot 6^k$ vertices $w$ in $G'$ for which $d_{G'}(u'v',w)=2k-1$.
Now $\frac{2\cdot 6^k}{ \abs{ V(H)}} \ge \frac{2\cdot 6^k}{7.0035N(7,g) + \abs{V(T) }} \sim \frac{5}{7.0035}.$ 
\end{proof}

So far, $8$ explicit examples of $\frac 23$- \v Soltes' graphs have been found after performing multiple searches.
They are listed and verified in~\cite[doc. \text{2\_3\_Soltesgraphs}]{C24}.

They contain the example on $12$ vertices in~\cref{fig-good-vertices} (left) mentioned in~\cite{AOVVVY23, DV24},
an example~\cref{fig:Soltes2/3_SM} on $69$ vertices first discovered by Kurt Klement Gottwald and Snje{\v{z}}ana Majstorovi{\'c} Ergoti{\'c} (some years ago) and an example~\cref{fig:Soltes2/3_JJ} on $60$ vertices found by Jorik Jooken.

The ones of order $69$ and $384$ are derived from subdiving some edges of vertex-transitive graphs. 
The ones of order $60,90,180,300$ by adding new vertices which are connected to the end vertices of a perfect matching (an edge orbit) of the cubic vertex-transitive graphs which can be found on \url{https://houseofgraphs.org/graphs} by their graph Id's $36462, 36702, 38064$ and $41433$.

\section{Counterarguments to the present conjectures on \v Solt\'es' graphs}\label{sec:counterarg}

When~\cite[conj.~4.2]{BKS23} suggested that $C_{11}$ may be the only \v Solt\'es' graph, they did so based on the limited progress on fractional \v Solt\'es' graphs.
In~\cite{AOVVVY23} and~\cite{DV24}, there has been proven that there are infinitely many graphs for which roughly half of the vertices are \v Solt\'es' vertices. This seems the best one can do when aiming for an infinite family of graphs for which the \v Solt\'es' vertices span a $2$-regular graph and the graph has an analogous simple structure.
A few examples of $\frac 23$ and plausibly higher (conditional proof with the conditions being related to the hard problem on determining the order of cages, see e.g.~\cite{EJMS19}) were given in~\cref{sec:conditionalconstruction}.
See~\cref{fig-good-vertices} for the only two graphs of order bounded by $12$ different from $C_{11}$, with at least half of the vertices being \v Solt\'es' vertices.
Depending on the beliefs of the behaviour for large order, one may even expect that there also graphs with a $\frac 89$-fraction of the vertices being \v Solt\'es' vertices (considering the arc-graph of an arc-transitive $8$-regular graph and connect the vertices of each $K_8$ with a new vertex).

 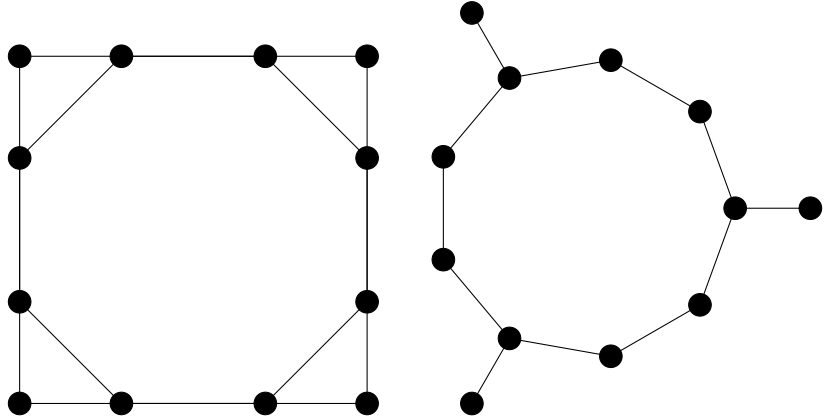
\begin{figure}[h!]
 \centering
 \begin{tikzpicture}
  \foreach \x in {0,45,...,315}{\draw[fill] (22.5+\x:2.5) circle (0.15);
\draw(\x+67.5:2.5) -- (\x+22.5:2.5);
}
 \foreach \x in {45,135,225,315}{
 \draw[fill] (\x:3.26640741219) circle (0.15);
 \draw(\x+90:3.26640741219) -- (\x:3.26640741219);
 }

\end{tikzpicture}\qquad
\begin{tikzpicture}
 \foreach \x in {0,40,...,320}{\draw[fill] (\x:2) circle (0.15);
\draw(\x+40:2) -- (\x:2);
}
 \foreach \x in {0,120,240}{
\draw[fill] (\x:3) circle (0.15);
\draw(\x:3) -- (\x:2);
 }

\end{tikzpicture}

\caption{The unique graphs on 12 vertices with 8 and 6 \v Solt\'es' vertices, respectively.}
\label{fig-good-vertices}
\end{figure}

The other argument in~\cite{BKS23} was the verification of some small vertex-transitive graphs, none of them being a \v Solt\'es' graph. Noting that $W(C_n)=W(P_{n-1})$ implies $n=11$, one can focus on the graphs with minimum degree at least $3.$
The following (near-)corollary of ~\cref{prop:diam_ge4} and~\cref{prop:diam_ge3} indicates that the transmission of a vertex is larger than the sum of few additional detours among the other pairs for the dense graphs.

\begin{cor}\label{cor:10^8fails}
    There is no vertex-transitive graph $G$ of order $n\le 47$ and $\delta(G) \ge 3$ for which $W(G)\le W(G \setminus v)$ for $v \in V(G).$
\end{cor}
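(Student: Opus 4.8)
The plan is to reduce the statement, using generic properties of vertex-transitive graphs together with \cref{prop:diam_ge3} and \cref{prop:diam_ge4}, to the case $\diam(G)\ge 4$, and then to settle the finitely many remaining graphs by direct computation. For the reduction, recall that every connected graph has at least two non-cut vertices, so a connected vertex-transitive graph has none; hence $G\setminus v$ is connected for every $v\in V(G)$. Moreover $G$ is regular of degree $d=\delta(G)\ge 3$ and self-centric, i.e.\ $\rad(G)=\diam(G)$. If $\diam(G)\le 2$, then \cref{prop:diam_ge3} together with this connectedness produces a vertex $v$ with $W(G\setminus v)<W(G)$; if $\diam(G)=3$, then $\rad(G)=3$ as well, so \cref{prop:diam_ge4} does the same. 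In either case $G$ is not a counterexample, so from now on we may assume $\diam(G)\ge 4$.

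It then remains to inspect the vertex-transitive graphs with $n=\abs{V(G)}\le 47$, $\delta\ge 3$ and $\diam\ge 4$. The condition $\delta\ge 3$ excludes all cycles, in particular $C_{11}$ (which is precisely why it is imposed, recalling $W(C_n)=W(P_{n-1})$ forces $n=11$), and what remains is a finite, explicitly available family: in general the census of Holt and Royle lists all connected vertex-transitive graphs on at most $47$ vertices, and for $d=3$ one can alternatively use the known classification of cubic vertex-transitive graphs. For each such $G$ one computes $W(G)$ and, using vertex-transitivity, $W(G\setminus v)$ for a single vertex $v$, and checks that $W(G\setminus v)<W(G)$. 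Together with the two propositions this proves the statement; it also explains the remark in \cref{sec:intro} about the Holt--Royle census, since \cref{prop:diam_ge3} and \cref{prop:diam_ge4} already handle all of its more than $10^{8}$ members of diameter at most $3$.

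I expect the real difficulty --- and the reason this is only a \emph{near}-corollary --- to lie precisely in the range $\diam(G)\ge 4$, which the two propositions do not cover. Their counting rests on two features special to diameter $3$: at most $i-1$ vertices lie on every shortest path between a given pair at distance $i$, and deleting such a vertex increases the distance by at most $2\diam(G)-i$ (reroute through an antipode of the deleted vertex). Summing over all vertices gives $\sum_{v}\bigl(W(G\setminus v)-W(G)\bigr)\le\sum_{i\ge 1} n_i\bigl((i-1)(2\diam(G)-i)-2i\bigr)$, where $n_i$ is the number of pairs at distance $i$; for $\diam(G)=3$ each bracket is at most $0$ and the $i=1$ bracket equals $-2$, which forces some $v$ with $W(G\setminus v)<W(G)$. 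Already at $\diam(G)=4$ the brackets for $i=2,3,4$ become positive, so no counting argument of this exact shape closes the case. The anticipated obstacle is therefore not the reduction but ensuring that the residual family of diameter-$\ge 4$ vertex-transitive graphs on at most $47$ vertices is small enough for the direct verification to be feasible --- i.e.\ that the available census data genuinely suffices, in lieu of a uniform argument extending \cref{prop:diam_ge4}.
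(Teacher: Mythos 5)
Your proposal is correct and follows essentially the same route as the paper: use \cref{prop:diam_ge3} and \cref{prop:diam_ge4} (together with the standard facts that vertex-transitive graphs are $2$-connected and self-centric) to dispose of diameter at most $3$, then verify the remaining diameter-$\ge 4$ vertex-transitive graphs on at most $47$ vertices from the Holt--Royle census by direct computation. The paper's only additional ingredient is a practical shortcut for isolating those residual graphs (about $17000$ of them) without computing every diameter, via Watkins' bound $\kappa(G)\ge\frac{2}{3}(d+1)$ giving $n\ge 2(d+1)+\frac{2}{3}(d+1)$ when $\diam\ge 4$.
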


\begin{proof}
    By~\cref{prop:diam_ge4} and~\cref{prop:diam_ge3}, we can exclude the examples with diameter bounded by $3.$
    The remaining examples (around $17000$) from the census~\cite{HR20} can be verified as well by direct verification.
    Since the vertex-connectivity of a $d$-regular vertex-transitive graph is at least $ \frac{2}{3}(d+1)$ (see proof of~\cite[Thm.~3]{Watkins70}) and thus $n \ge \sum_{1 \le i \le 4} \abs{N_i}\ge 2(d+1)+\frac{2}{3}(d+1),$
    also the diameter did not have to be checked for all graphs of the census to deduce the list of the graphs with diameter at least $4.$ This is done in~\cite[doc. \text{VT47}]{C24}.
\end{proof}

By~\cite[doc. \text{arcgraph10000}]{C24}, we know meanwhile that there are many vertex-transitive graphs for which $\delta(G)>2$ and $W(G) \le W(G \setminus v)$ and the argument of~\cref{cor:10^8fails} thus does not extend. 

Next, we prove that there are counterexamples to some of the most natural conjectures for the variant of~\cref{prob:extensionofSoltes}, by giving two families of non-regular graphs (which are thus not vertex-transitive nor a Cayley graph) for certain $z.$

\begin{prop}
    There are non-regular graphs $G$ and values $z$ satisfying $W(G)-W(G\setminus v)=z$ for every $v \in V(G).$
    Hereby the number of orbits or $\Delta-\delta$ can be unbounded.
\end{prop}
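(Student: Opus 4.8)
The goal is to construct, for suitable integers $z$, non-regular graphs $G$ on which the transmission differences $W(G) - W(G \setminus v)$ are all equal to $z$, and moreover to make the construction flexible enough that the number of degree classes (or $\Delta - \delta$) grows without bound. My plan is to build such graphs by a \emph{symmetrization} or \emph{balancing} trick: start from a small ``kernel'' gadget in which the different vertex types already have differences that differ in a controlled way, then attach pendant paths (or small trees) of carefully chosen lengths at each vertex type so that the extra transmission contributed by the appended structure exactly cancels the discrepancy. This is the same mechanism used in the conditional construction of \cref{sec:conditionalconstruction} and in the quartic example: attaching a path $P_\ell$ to a vertex $v$ changes $W(G) - W(G\setminus v)$ by a quantity that is a quadratic polynomial in $\ell$ plus a linear term depending on where $v$ sits, and these quantities can be tuned by choosing $\ell$ and an extra pendant vertex.

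Concretely, the first step is to isolate a base graph $G_0$ with exactly two (or, for the unbounded version, $t$) orbits of vertices, compute $d_v := W(G_0) - W(G_0 \setminus v)$ as a function of the orbit of $v$, and record the differences $d_v - d_w$ between orbits. Because $G_0$ is arbitrary here, these differences can be made as large and as varied as we wish. The second step is to attach to each orbit representative a pendant path of length $\ell_i$ together with at most one extra pendant vertex at a controlled position; the effect on the difference at a vertex $v$ in orbit $j$ splits into (a) the ``internal'' contribution from the appended tree at $v$'s own orbit, which is $\binom{\ell_j+1}{2}$-like, minus the part counted through $v$, and (b) the ``external'' contributions from the trees appended at the other orbits, which only add to $\sigma(v)$ and hence only shift the difference by a term linear in the $\ell_i$'s and in the distances between orbits. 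Solving the resulting system of equations for the $\ell_i$ (a triangular-in-nature Diophantine system, always solvable by the ``choose largest $\ell$ with $\binom{\ell}{2} \le x$, then add a pendant vertex to absorb the remainder'' argument from the Claim in \cref{sec:conditionalconstruction}) produces a graph $G$ on which all differences coincide; the common value is then the desired $z$. Since one is free to append paths of wildly different lengths at the different orbits, the maximum and minimum degrees of $G$ (and the number of orbits) are unbounded.

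For the ``unbounded $\Delta - \delta$'' half it is cleaner to take a base graph that is a join or blow-up type construction — for instance a suitable modification where one vertex has degree linear in a parameter and others have bounded degree — and verify directly that the distance bookkeeping works; alternatively, one can start from a caterpillar-like or ``broom'' gadget whose degree sequence is already spread out and then balance as above. I would present two explicit families: one emphasizing many orbits (iterated asymmetric attachments to, say, a cycle or a complete bipartite core), and one emphasizing a single high-degree vertex (a star-like core with pendant paths of matched lengths on the leaves), matching the two clauses of the statement.

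The main obstacle I anticipate is \emph{bookkeeping control of the external contributions}: when one attaches a long path at orbit $i$, it changes $\sigma(v)$ for \emph{every} vertex $v$, and in particular it changes the differences at all the other orbits, so the equations are coupled rather than independent. The quadratic growth of the internal term dominates the linear external terms for long enough paths, which is what makes the system solvable, but one has to check that the ``absorb the remainder with an extra pendant vertex'' move does not itself re-break the balance at the other orbits — i.e. that the coupled system can be triangularized (choose the longest path first, then the next, each time re-solving). Making this precise, and verifying that the resulting integer solutions exist for all the $z$ one claims (rather than just infinitely many $z$), is the delicate part; everything else is routine distance arithmetic of the type already carried out in \cref{sec:conditionalconstruction}.
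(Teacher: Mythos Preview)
Your approach has a genuine and fatal gap: the statement requires $W(G)-W(G\setminus v)=z$ for \emph{every} $v\in V(G)$, including any pendant paths or trees you attach. But attaching a pendant path creates (i) a leaf $u$, for which $W(G)-W(G\setminus u)=\sigma_G(u)$ with no detour term at all, and (ii) internal path vertices that are cut vertices, so $G\setminus v$ is disconnected and $W(G\setminus v)$ is not even defined. The balancing trick from \cref{sec:conditionalconstruction} that you invoke was designed to make a large \emph{fraction} of the vertices (those far from the attachment, inside the vertex-transitive core) into \v Solt\'es' vertices; it never claims and cannot achieve that the appended tree vertices themselves satisfy the equation. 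So no amount of tuning the lengths $\ell_i$ will rescue the construction: the very act of appending a tree destroys the property at the new vertices.

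The paper avoids this by giving two explicit $2$-connected families and computing directly. For unbounded orbit number it modifies a circular ladder $CL_{2k+4}$ by turning two opposite $C_4$'s into $K_4$'s; a short case analysis shows that both the transmission $\sigma(v)$ and the total distance increase $\sum_{x,y}(d_{G_v}(x,y)-d_G(x,y))$ are the same for every vertex, so their difference is constant. For unbounded $\Delta-\delta$ it builds a graph from two hub sets $A_0,A_3$ of size $k$ joined completely to blocks $B_1,B_2$ of triangles, with $C_6$'s between triangle pairs; here every pair of vertices has at least two internally disjoint shortest paths, so $d_{G_v}(x,y)=d_G(x,y)$ always, and one only needs to check that all transmissions agree. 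The key structural feature in both families is $2$-connectivity together with enough redundancy in shortest paths, which is precisely what a pendant-tree attachment cannot provide.
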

    
\begin{proof}
    Our first example consists of a circular ladder $CL_{2k+4}$ (two cycles $C_{2k+4}$ connected with a perfect matching), for which we add replace two opposite $C_4$s by $K_4$s (i.e., add $4$ additional edges). An example for $k=1$ is depicted in~\cref{fig:nonreg_exam} (left). This graph has $\ceilfrac k2 +1$ many vertex orbits.
    The initial circular ladder is a planar graph, which can be drawn with an inner- and outer cycle $C_{2k+4}$.
    Removing the edges of the two $K_4$s, we would obtain two components. We call the vertices of these components the left and the right vertices.
    The transmission for all vertices is the same.
    For any left vertex of the outer cycle, the sum towards the other vertices on the outer cycle, is fixed (no shortest paths use vertices at the inner cycle).
    The ones to the inner cycle are within the same distance as their copies on the outer cycle, except that for the left ones the distance is one larger.
    Thus $\sigma(v)=\floorfrac{(2k+4)^2}{4}+(k+2).$
    Let the outer left vertices be $v_0$ (a $4$-vertex) up to $v_{k+1}.$
    Let the outer right vertices be $w_0 \ldots w_{k+1}$ in a symmetric way.
    Let $v'_i$ and $w'_i$ denote the inner vertices.
    Now removing the vertex $v_i$, implies that the distance between $v_j$ and $v_m$ where $0 \le j <i<m \le k+1$ increases by two.
    The distance between $v_j$ and $w_m$ (or $w'_m$) increases by one, if $j+m>k+1$ and $j<i$.
    Analogously for $v_m$ and $w_j$ when $m>i.$
This implies that the sum of increases in the distance equals 
    $$2i(k+1-i)+2 \sum_{j=1}^{i-1} j + 2\sum_{j=1}^{k-i}=k^2+k.$$
Since both the transmission and difference in distances is independent of $i$, and by symmetry between left and right, as well as inner and outer, we conclude.

    Now we consider a second family, where the irregularity can be arbirarily large, i.e. $\Delta(G)-\delta(G)=k$ for every $k>0.$
    Let $n=2k+2$.
Let $A_0$ and $A_3$ be sets of $k$ vertices, $B_1$ and $B_2$ be both sets of order $3n$ spanning $n$ triangles.
Let $G[A_0, B_1]$ and $G[A_3, B_2]$ be complete bipartite.
Here $G[U,V]$ denotes the subgraph of $G$ spanned by $U \cup V$ with only edges $uv$, where $u \in U, v \in V$, present.
For every pair of a triangle $T_1$ in $B_1$ and a triangle $T_2$ in $B_2$, add a $C_6$ in between (i.e., let $G[T_1, T_2]\cong C_6$).
It is easy to see that every $2$ vertices have at least two disjoint shortest paths in between them.
To conclude, we only need to prove that the transmission of every vertex $v$, $\sigma(v)=\sum_{u \in V} d(v,u)$, is equal.
For a vertex $v$ in $A_0, A_3$, since it has eccentricity $3$, we easily compute that $\sigma(v)=\sum_{i=1}^3 i \cdot \abs{N_i(v)} = 3n + 2 \cdot (3n+k-1) + 3 \cdot k= 9n+5k-2.$ Here $N_i(v)=\{u \in V \mid d(u,v)=i\}.$
Every vertex in $B_1 \cup B_2$ has eccentricity $2$ and degree $k+2+2n=3n-k.$
Its transmission thus equals $2(6n+2k-1)-(3n-k)=9n+5k-2=23k+16.$
An example for $k=1$ is depicted in~\cref{fig:nonreg_exam} (right).
\end{proof}

\begin{figure}[h]
    \centering
    \begin{tikzpicture}[scale=0.9]

     \foreach \z in {1,2,5,4}{
     \foreach \y in {1,2}{
     \draw[fill] (\y,\z) circle (0.1);
     }
 }

 \foreach \z in {1,4}{
     \foreach \y in {1,2}{
     \draw(\y,\z) --(\y,\z+1);
     }
 }

 \foreach \y in {0,-1,3,4}{
     \draw[fill] (\y,3) circle (0.1);
     }
      \foreach \y in {-1,3}{
     \draw (\y,3) --(\y+1,3);
     }

     \foreach \z in {1,2,5,4}{
     \draw (1,\z)--(2,\z);
 }

 \draw (-1,3)--(1,5)--(2,4)--(3,3)--(2,2)--(1,1)--(-1,3);

 \draw (0,3)--(1,4)--(2,5)--(4,3)--(2,1)--(1,2)--(0,3);

    \end{tikzpicture}\quad
    \begin{tikzpicture}[scale=0.6]

 \foreach \z in {1,2,...,12}{
     \foreach \y in {1,2,...,12}{
     \draw[black!20!white] (5,\z) -- (10,\y);
     }
 }

  \foreach \z in {1,2,...,12}{
     \draw (5,\z) -- (0,6.5);
     \draw (10,\z)--(15,6.5);
     }

  \foreach \z in {1,4,...,10}{
     \draw (10,\z) -- (10,\z+2);
     \draw (5,\z) -- (5,\z+2);
     }

  \foreach \z in {1,4,...,10}{
  \draw[dashed] (10,\z) arc (-30:30:2cm);
     \foreach \y in {1,4,...,10}{
     \draw[red!70!white] (5,\z) -- (10,\y);
     }
 }

   \foreach \z in {2,5,...,11}{
   
     \foreach \y in {2,5,...,11}{
     \draw[red!70!white] (5,\z) -- (10,\y);
     }
 }

   \foreach \z in {3,6,...,12}{
   \draw[dashed] (5,\z) arc (150:210:2cm);
   
     \foreach \y in {3,6,...,12}{
     \draw[red!70!white] (5,\z) -- (10,\y);
     }
 }

            \draw[fill] (0,6.5) circle (0.15);
        \draw[fill] (15,6.5) circle (0.15);
        
        \foreach \x in {1,2,...,12}{
        \draw[fill] (5,\x) circle (0.15);
        \draw[fill] (10,\x) circle (0.15);
        }
    \end{tikzpicture}
    \caption{Non-regular graphs on $12$ and $26$ vertices resp. for which $W(G)-W(G \setminus v)$ is independent of $v$}
    \label{fig:nonreg_exam}
\end{figure}
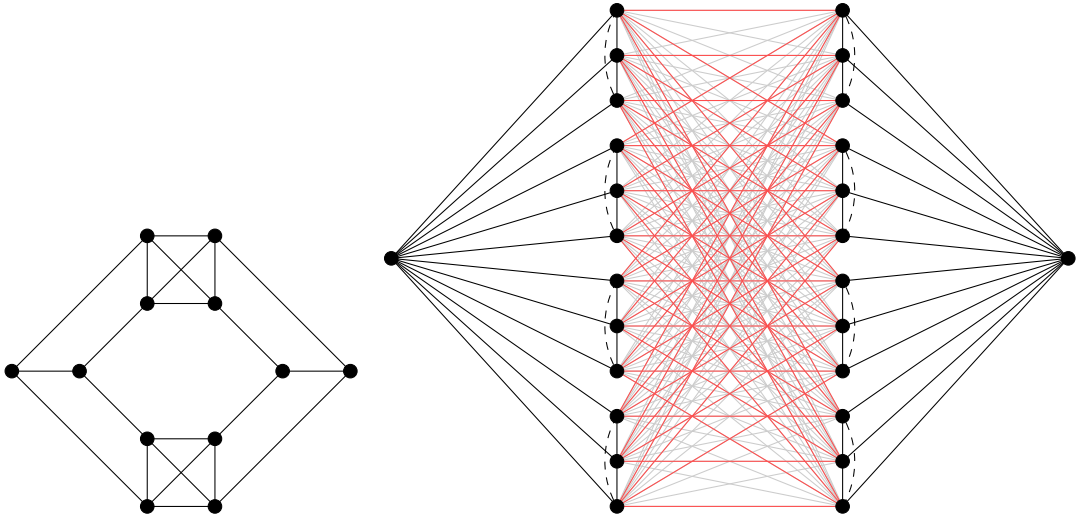

Nevertheless, these examples are constructed in such a way that the distances in $G \setminus v$ barely change, which implies that $W(G)-W(G\setminus v)$ is large.
We give an example of a non-vertex-transitive graphs for which $W(G)-W(G\setminus v)$ is fixed and small compared with its order.

\begin{exam}
    The arc-graph of \url{https://houseofgraphs.org/graphs/19281} has order $320$ is non-vertex-transitive graphs and satisfies $W(G)-W(G\setminus v)=40$ for every $v \in V(G).$
    This is verified in~\cite[doc. \text{Nonvertex-transitive}]{C24}.
\end{exam}

Checking graphs with two orbits, there are many for which $W(G)-W(G \setminus v)$ is independent of $v$.
Almost all connected large graphs with few orbits are at least $2$-connected.
If there is no restriction on the sign of $W(G)-W(G \setminus v)$, as this difference is bounded by $-n^3$ and $n^3$, one may expect that there are both vertex-transitive and non-vertex-transitive (and thus not Cayley graphs) \v Solt\'es' graphs. The latter is the case if there are $d$-regular (for $3 \le d \le 7$) arc-transitive graphs with large girth $g$ near the Moore bound $N(d,g)$.



\section{Conclusion}\label{sec:conc}

The problem of \v Solt\'es, due to its charming simplicity, has intrigued many people and resulted in over a $100$ citing papers in the past. 
In this paper, we gave indications why some of the current conjectures were not well-founded and may be false.
Furthermore, the research lead to the following key question.

\begin{q}\label{q:keyques}
    Are there infinitely many negative-\v Solt\'es' graphs?
\end{q}


We conjecture that if this question has an affirmative answer
, for every $z \in \mathbb Z,$ there are infinitely many graphs $G$ for which $W(G)-W(G \setminus v)=z$ for every $v \in V(G)$ and among them, there are also non-vertex-transitive ones.
As an analogy, we refer to the conjecture of Heath-Brown on solutions of the Diophantine equation $x^3+y^3+z^3=k$, which says that there are infinitely many solutions whenever $k \not \equiv \pm 4 \pmod 9$ (i.e., when there is no simple reason of non-existence). Also for this somewhat computationally simpler question, it has taken a while to find large solutions $(x,y,z) \in \mathbb Z^3$ for some values $k$, see e.g.~\cite{BS20}.

If there is a value $n_0$ such that every graph $G$ of order $n \ge n_0$ satisfies $W(G)>W(G \setminus v)$ for $v \in V(G)$, (no negative-\v Solt\'es' graph exists with order at least $n_0$), we know that $n_0$ is much larger than $10^4$ by~\cite[doc. \text{arcgraph10000}]{C24}.
Under the existence of $n_0$, addressing the problem of \v Solt\'es will need ideas to determine or exclude the \v Solt\'es' graphs with order bounded by $n_0.$
When~\cref{q:keyques} has a negative answer, the number of solutions for~\cref{prob:extensionofSoltes} may be approximately increasing over positive $z$.

Related to~\cref{q:keyques}, one can also ask the extremal question on a good lower bound for $W(G)-W(G \setminus v)$ for vertex-transitive graphs of order $n$ and degree $d$.
Checking the census on cubic vertex-transitive graphs up to $1280$ vertices~\cite{POSV13}, we note that there are only a few dozens of negative-\v Solt\'es' graph among them and all of these are arc-graphs, giving a direction for the extremal question.
Hereby we formulate the following related question for general graphs.
\begin{q}
    A graph $G$ for which $W(G) \le W(G \setminus v)$ for all $v\in V(G)$ has minimum degree upper bounded by $7?$
\end{q}

\section*{Acknowledgments}
The author thanks Marston Conder, Jorik Jooken, Snje{\v{z}}ana Majstorovi{\'c} Ergoti{\'c} and Dragan Stevanovi\'c for discussions and informing on references and state-of-the-art results.

\bibliographystyle{abbrv}
\bibliography{ref}

\appendix

\section{Presentation of examples of $\frac 23$-\v Solt\'es' hypergraphs}\label{sec:app}

The smallest new example is a graph formed by a $C_{40}$ on $[40]$, with additional edges between $i$ and $i+11$ if $i \equiv 1 \pmod 4$
and between $i$ and $i+9$ if $i \equiv 2 \pmod 4$, together with an additional vertex connected to $i$ and $i+1$ for $2 \mid i.$
Here indices are considered modulo $40$.

\begin{figure}[h]
    \centering
    \begin{tikzpicture}[scale=0.7]

\foreach \x in {0,1,2,...,39}{
\draw (\x*9:8) -- (9*\x+9:8) ;
\draw[fill] (\x*9:8) circle (0.15);
}

\foreach \x in {0,1,2,...,19}{
\draw (\x*18:8) -- (\x*18+4.5:9)-- (18*\x+9:8) ;
\draw[fill=red] (\x*18+4.5:9) circle (0.15);
}

\foreach \x in {0,1,2,...,9}{
\draw (36*\x+9:8) -- (36*\x+108:8) ;
\draw (36*\x+18:8) -- (36*\x+99:8) ;
}

\end{tikzpicture}
    \caption{A graph of order $60$ with $40$ \v Solt\'es' vertices}
    \label{fig:Soltes2/3_JJ}
\end{figure}
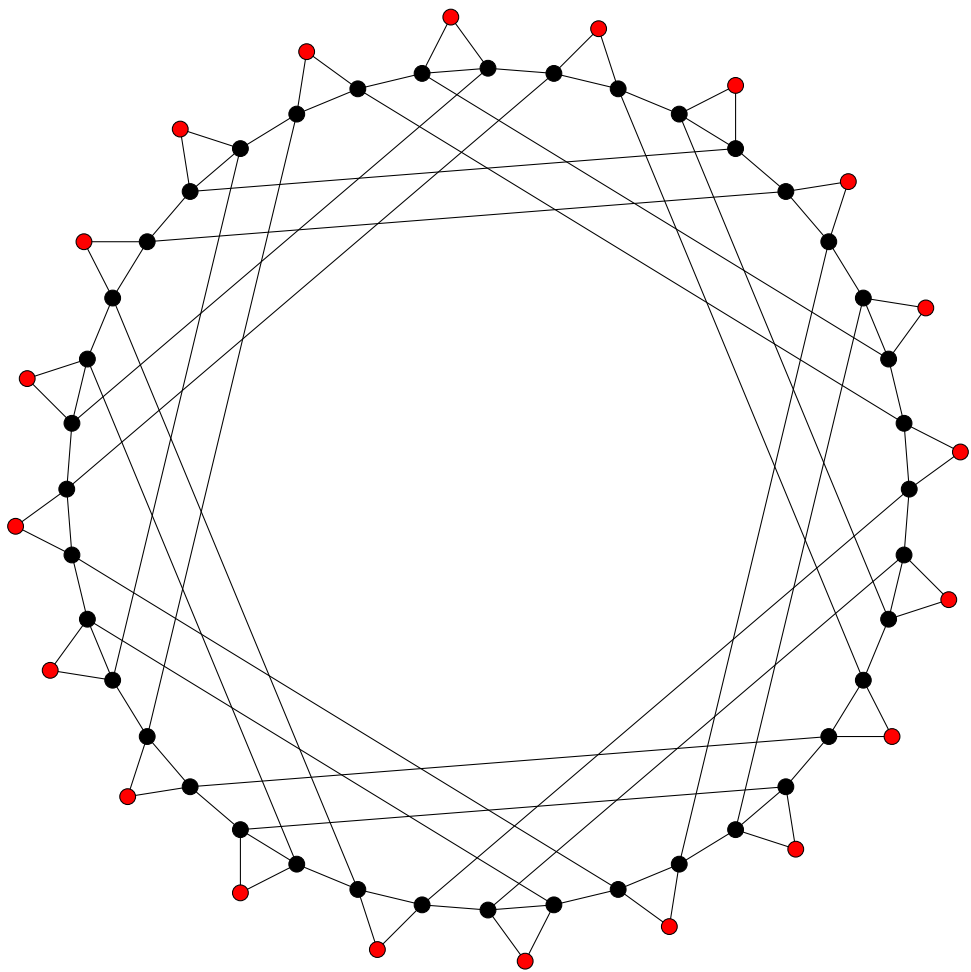

\begin{figure}[h]
    \centering
    \begin{tikzpicture}[scale=0.7]

\foreach \x in {0,1,2,...,21}{
\draw (\x*360/23:8) -- (360/23*\x+360/23:8) ;
\draw (\x*360/23:6) -- (360/23*\x+360/23:6) ;
}
\foreach \x in {0,1,2,...,21,22}{
\draw[fill] (\x*360/23:8) circle (0.15);
\draw[fill] (\x*360/23:6) circle (0.15);
\draw (\x*360/23:6) -- (\x*360/23:8);
\draw[fill=red] (\x*360/23:7) circle (0.15);
}

\draw (22*360/23:8) -- (360/23*23:6) ;

\draw (22*360/23:6) -- (360/23*23:8) ;

\end{tikzpicture}
 \quad
    \begin{tikzpicture}[scale=0.7]

\foreach \x in {0,1,2,...,21,22}{
\draw (\x*360/23:6) -- (360/23*\x+360/23:8) ;
\draw (\x*360/23:8) -- (360/23*\x+360/23:6) ;
\draw (\x*360/23:6) -- (\x*360/23:8);
\draw[fill] (\x*360/23:8) circle (0.15);
\draw[fill] (\x*360/23:6) circle (0.15);
\draw[fill=red] (\x*360/23:7) circle (0.15);
}

\end{tikzpicture}
    \caption{A graph of order $69$ with $46$ \v Solt\'es' vertices ($2$ representations)}
    \label{fig:Soltes2/3_SM}
\end{figure}
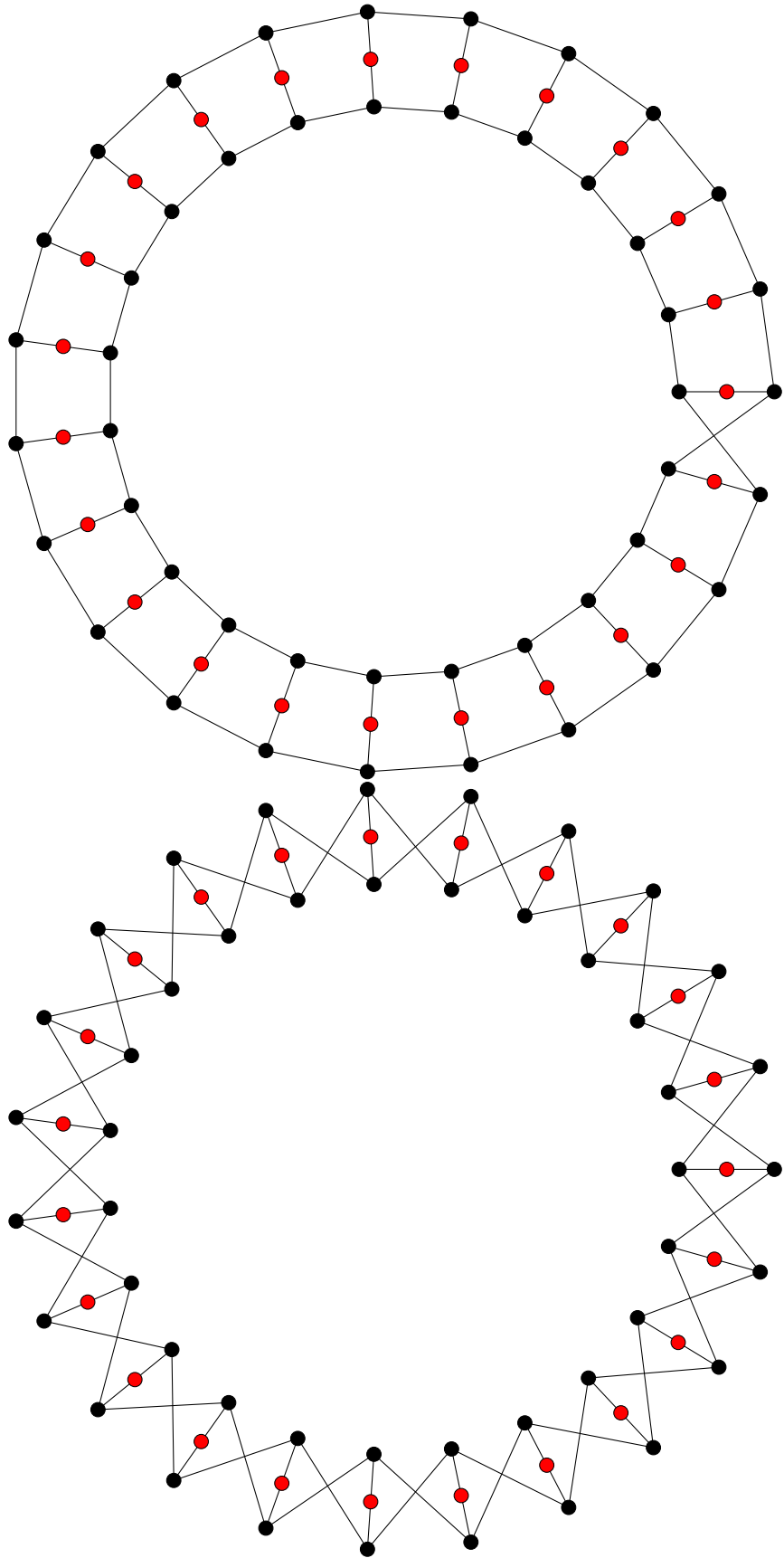

\begin{figure}[h]
    \centering
    \begin{tikzpicture}[scale=0.8]

\foreach \x in {0,1,2,...,14}{
\draw (24*\x+6:8) -- (24*\x+12:7) -- (24*\x+18:7)-- (24*\x+24:8)--(24*\x+30:8);
\draw[fill] (24*\x+6:8) circle (0.15);
\draw[fill] (24*\x+12:7) circle (0.15);
\draw[fill] (24*\x+18:7) circle (0.15);
\draw[fill] (24*\x+24:8) circle (0.15);
}

\foreach \x in {0,1,2,...,14}{
\draw[dotted] (24*\x+6:8) -- (24*\x+15:9)-- (24*\x+24:8) ;
\draw[dotted] (24*\x+12:7) -- (24*\x+15:7.5)-- (24*\x+18:7) ;
\draw[fill=red] (\x*24+15:7.5) circle (0.15);
\draw[fill=red] (\x*24+15:9) circle (0.15);
}

\foreach \x in {0,1,2,...,15}{
\draw[dashed] (24*\x+6:8) -- (24*\x+24:8) ;
\draw[dashed] (24*\x+12:7) -- (24*\x+114:7) ;
}

\end{tikzpicture}
    \caption{A graph of order $90$ with $60$ \v Solt\'es' vertices}
    \label{fig:Soltes2/3_SC}
\end{figure}

\end{document}